\documentclass[reqno]{amsart}

\usepackage[numbers]{natbib}
\usepackage{amssymb,mathtools,float,caption,wrapfig,hyperref,graphicx}
\usepackage[cal=boondox]{mathalfa}
\usepackage[abs]{overpic}
\hypersetup{colorlinks,linkcolor={blue},citecolor={blue},urlcolor={blue}}
\graphicspath{ {images/} }

\pagestyle{plain}

\setlength\arraycolsep{4pt}
\setlength{\bibsep}{6.0pt}
\setlength\belowcaptionskip{-0.7\baselineskip}
\setlength\abovecaptionskip{0.4\baselineskip}

\newtheorem{theorem}{Theorem}[section]

\newtheorem{lemma}[theorem]{Lemma}

\newtheorem{corollary}[theorem]{Corollary}

\theoremstyle{definition}
\newtheorem{definition}[theorem]{Definition}

\numberwithin{equation}{section}

\newcommand{\bZ}{\mathbb{Z}}

\newcommand{\bH}{\mathbb{H}}
\newcommand{\bC}{\mathbb{C}}
\newcommand{\cO}{\mathcal{O}}

\newcommand{\cF}{\mathcal{F}}

\begin{document}

\title[Projective elementary groups]{Fundamental polyhedra of projective elementary groups}

\thanks{The author is grateful to John Cremona for providing the data used to make Figure \ref{fig:1}.}

\author{Daniel E. Martin}
\address{Department of Mathematics, University of California, Davis, CA, USA}
\email{dmartin@math.ucdavis.edu}

\subjclass[2020]{Primary: 11R11, 30F40, 20F65. Secondary: 20G30, 52C05, 11A05.}

\keywords{Bianchi group, elementary matrices, fundamental polyhedron, Ford domain, imaginary quadratic, hyperbolic space, non-Euclidean}

\date{\today}

\begin{abstract}For $\cO$ an imaginary quadratic ring, we compute a fundamental polyhedron of $\text{PE}_2(\cO)$, the projective elementary subgroup of $\text{PSL}_2(\cO)$. This allows for new, simplified proofs of theorems of Cohn, Nica, Fine, and Frohman. Namely, we obtain a presentation for $\text{PE}_2(\cO)$, show that it has infinite-index and is its own normalizer in $\text{PSL}_2(\cO)$, and split $\text{PSL}_2(\cO)$ into a free product with amalgamation that has $\text{PE}_2(\cO)$ as one of its factors.\end{abstract}

\maketitle

\section{Introduction}\label{sec:1}

Given an order $\cO$ in an imaginary quadratic field $K$, let $\text{PE}_2(\cO)$ denote the \textit{projective elementary group} generated by the elementary (diagonal, triangular, or permutation) matrices in $\text{PSL}_2(\cO)$. The action of $\text{PE}_2(\cO)$ on the upper half-space model of hyperbolic 3-space, $\bH=\{(\zeta,t)\,|\,\zeta\in\bC,t\in(0,\infty)\}$, admits a fundamental polyhedron---a polyhedron whose orbit under $\text{PE}_2(\cO)$ tessellates $\bH$. (See Sections 1.1, 2.2, and 7.3 of \cite{elstrodt} for background.) Our aim is to compute one in particular:

\begin{definition}\label{def:bianchi}Given a fundamental polygon $F\subset\bC$ for $\cO$, the \emph{Ford domain} of $\Gamma\leq\text{PSL}_2(\cO)$ is $$\cF=\{P\in\bH\,|\,\zeta(P)\in \overline{F},\,t(P)\geq t(gP)\text{ for any }g\in\Gamma\},$$ where $\zeta(P)$ and $t(P)$ denote the first and second coordinates of $P$.\end{definition}

The image of $P\in\bH$ under some $g\in\text{PSL}_2(\cO)$ has second coordinate \begin{equation}\label{eq:1}t(gP)=\frac{t(P)}{|\alpha-\beta\zeta(P)|^2+|\beta|^2t(P)^2},\hspace{1cm}g=\begin{bmatrix}\delta & -\gamma\\ -\beta & \alpha\end{bmatrix}.\end{equation} Therefore if $\beta=0$ and $\alpha$ is a unit, $t(P)=t(gP)$ for all $P\in\bH$. In this case, assuming $\alpha=\pm 1$, upper triangular matrices shift $\cF$ just as they shift its projection $\overline{F}$. If $\beta\neq 0$, (\ref{eq:1}) shows that points $P$ satisfying $t(P)=t(gP)$ form an open Euclidean hemisphere centered at $\alpha/\beta\in\partial\hspace{0.03cm}\bH$ with radius $1/|\beta|$. We call this the \textit{isometric hemisphere} of $g$, denoted $S_g$. It follows from (\ref{eq:1}) that $t(P)\geq t(gP)$ if and only if $P$ is outside $S_g$. Thus, given a choice of $F$, the Ford domain of some $\Gamma\leq\text{PSL}_2(\cO)$ consists of $P\in\bH$ above $\overline{F}$ that lie outside or on $S_g$ for every $g\in\Gamma$. Figure \ref{fig:1} shows a hollowed out Ford domain of $\text{PSL}_2(\bZ[i\sqrt{10}])$ from above. The imaginary axis runs left to right.

\begin{theorem}\label{thm:intro}The only Euclidean hemispheres that contribute a face to the Ford domain of $\emph{PE}_2(\cO)$ are those of radius 1 centered on elements of $\cO$.\end{theorem}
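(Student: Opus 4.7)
My plan is to reduce the theorem to an approximation property for the pair $(\alpha(g),\beta(g))$ attached to $g \in \text{PE}_2(\cO)$, and then prove that property by induction on $|\beta(g)|^2$.

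First, the putative faces are realized: for each $n \in \cO$, the matrix
\[\begin{pmatrix}0 & -1 \\ 1 & -n\end{pmatrix} = \begin{pmatrix}0 & -1 \\ 1 & 0\end{pmatrix}\begin{pmatrix}1 & -n \\ 0 & 1\end{pmatrix}\]
is a product of two elementary matrices and so lies in $\text{PE}_2(\cO)$; by (\ref{eq:1}) its isometric hemisphere has center $n$ and radius $1$. Conversely, since $|\beta(g)|^2$ is a positive integer whenever $\beta(g) \in \cO \setminus \{0\}$, the case $|\beta(g)| = 1$ forces $\beta(g)$ to be a unit of $\cO$ and $\alpha(g)/\beta(g)$ to lie in $\cO$, giving $S_g$ the desired form already. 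It only remains to show $S_g$ contributes no face when $|\beta(g)| > 1$.

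Because every isometric hemisphere is tangent to $\partial\bH$, a hemisphere of center $c$ and radius $r$ lies, as a subset of $\bH$, inside the open ball of a radius-$1$ hemisphere at $n \in \cO$ exactly when $|c-n|+r \leq 1$; tangency on $\partial\bH$ is harmless because $\partial\bH \not\subset \bH$. Applied to $S_g$, the theorem reduces to the following approximation property:
\[\text{for every } g \in \text{PE}_2(\cO) \text{ with } |\beta(g)|>1,\ \text{some } n \in \cO \text{ satisfies } |\alpha(g)-n\beta(g)| \leq |\beta(g)|-1.\]

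I would prove this property by induction on $|\beta(g)|^2$. Left multiplication by an upper triangular elementary matrix preserves $S_g$, while left multiplication by $\begin{pmatrix}0 & -1 \\ 1 & m\end{pmatrix}$ replaces the lower-left entry $-\beta(g)$ with $\delta(g) - m\beta(g)$. If some $m \in \cO$ satisfies $|\delta(g) - m\beta(g)| < |\beta(g)|$, the inductive hypothesis applies to the reduced matrix, and unwinding these left multiplications produces the required $n$ for $g$. The main obstacle is establishing that such an $m$ always exists when $g \in \text{PE}_2(\cO)$: this is strictly stronger than the usual Euclidean condition (which itself can fail in $\cO$) and so must be extracted from the elementary-matrix structure rather than from a ring-theoretic property. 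Because $g^{-1}$ also lies in $\text{PE}_2(\cO)$ with cusp $-\delta(g)/\beta(g)$, the approximability needed for $\delta(g)/\beta(g)$ is of the same form as the claim being proved for $\alpha(g)/\beta(g)$, so I expect both to emerge together from a continued-fraction-style decomposition of $g$ into elementary matrices over $\cO$ whose partial quotients encode the needed approximations simultaneously.
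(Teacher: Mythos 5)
Your setup is sound: the matrices $rs(-n)$ do produce the unit hemispheres, and the reduction of the theorem to the approximation property $|\alpha(g)-n\beta(g)|\le|\beta(g)|-1$ (containment of $S_g$ in a single closed unit ball, the tangency point falling on $\partial\hspace{0.03cm}\bH$) is valid --- indeed it is a slightly stronger statement than the one the paper actually establishes, which is only that the shadows of the $S_g$ avoid the complement of the union of closed unit discs. The problem is that everything after the reduction is a plan rather than a proof, and the plan runs in the wrong direction. Your induction descends on $|\beta(g)|^2$ by finding $m\in\cO$ with $|\delta(g)-m\beta(g)|<|\beta(g)|$; as you yourself note, this Euclidean-division property fails for general pairs in $\cO$ when $|\Delta|>12$ and so must be extracted from the elementary structure of $g$ --- but that extraction \emph{is} the content of the theorem, and you leave it as a hope (``I expect both to emerge\dots''). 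There is a second unproved step as well: even granting the division, an approximation $n'$ for the reduced matrix $rs(m)g$ does not obviously ``unwind'' to an approximation of the required quality for $g$, since the centers and radii of $S_g$ and $S_{rs(m)g}$ are related by a M\"obius transformation rather than a translation, and controlling the loss at each stage is again the real work.

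The paper runs the continued fraction in the opposite, workable direction. Lemma \ref{lem:form} puts $g$ in the normal form $s(\alpha_n)r\cdots s(\alpha_1)rs(\alpha_0)$ with $\alpha_1,\dots,\alpha_{n-1}\neq 0,\pm 1$ (the extra $s(\alpha_0)$ on the right is the nontrivial deviation from Cohn's standard form that makes the argument go through); these $\alpha_i$ are exactly the ``partial quotients'' you hope exist, but they are obtained by multiplying out a word in the generators, not by dividing. Then for $\zeta$ outside every closed unit disc centered on $\cO$, the recursion $\zeta_1=\zeta+\alpha_0$, $\zeta_{i+1}=\alpha_i-1/\zeta_i$ together with Lemma \ref{lem:mag} ($|\alpha_i|\ge 2$) keeps every $|\zeta_i|>1$, and the telescoping identity $\alpha-\beta\zeta=\zeta_n\cdots\zeta_1$ finishes the proof in one line. (If you want your stronger single-ball containment, the same computation delivers it: running $\zeta$ over the circle $|\zeta+\alpha_0|=1$ shows the closed shadow of $S_g$ lies inside the closed unit disc at $-\alpha_0$.) As it stands, your proposal correctly identifies the main obstacle but does not supply the lemma that removes it.
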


The projective elementary group always contains the reflection and shifts \begin{equation}\label{eq:2}r=\begin{bmatrix}0 & -1\\ 1 & 0\end{bmatrix}\hspace{0.5cm}\text{and}\hspace{0.5cm}s(\alpha)=\begin{bmatrix}1 & \alpha\\ 0 & 1\end{bmatrix}\end{equation} for $\alpha\in \cO$. In particular, $rs(-\alpha)\in\text{PE}_2(\cO)$, which has isometric hemisphere with radius $1$ and center $\alpha$. Theorem \ref{thm:intro} asserts there is nothing else for the Ford domain of $\text{PE}_2(\cO)$. We could turn Figure \ref{fig:1} into a Ford domain for $\text{PE}_2(\bZ[i\sqrt{10}])$ by erasing all hemispheres that lie completely below the white, translucent plane.

Let $\Delta$ denote the discriminant of $\cO$. When $|\Delta| > 12$ there is a gap between rows of unit hemispheres. The infinitude of points $\lambda/\mu$ with $(\lambda,\mu)=\cO$ that lie in this gap provides a new proof of Nica's theorem \cite{nica} \cite{senia}, which is a strengthened form of Cohn's theorem \cite{cohn}.

\begin{corollary}\label{cor:intro1}For $|\Delta| > 12$, $\emph{PE}_2(\cO)$ is an infinite-index subgroup of $\emph{PSL}_2(\cO)$. Furthermore, $\emph{PE}_2(\cO)$ is its own normalizer.\end{corollary}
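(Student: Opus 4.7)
The plan is to read both assertions from the Ford domain $\cF$ of $\text{PE}_2(\cO)$ described in Theorem \ref{thm:intro}. For the infinite-index claim, first note that $|\Delta|>12$ is equivalent to the covering radius of $\cO$ (as a lattice in $\bC$) exceeding $1$. Hence there is a nonempty open region $V\subseteq F$ on which $|\zeta-\alpha|>1$ for every $\alpha\in\cO$. By Theorem \ref{thm:intro}, for any $\zeta_0\in V$ and any $t>0$ the point $(\zeta_0,t)$ lies outside every isometric hemisphere that contributes a face to $\cF$, so $\cF$ contains the Euclidean cylinder $V\times(0,\infty)$. Its hyperbolic volume $\text{Area}(V)\int_0^{\infty}t^{-3}\,dt$ is infinite. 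But if $[\text{PSL}_2(\cO):\text{PE}_2(\cO)]=n<\infty$, then $\cF$ would be tessellated by $n$ copies of a fundamental polyhedron for the Bianchi group $\text{PSL}_2(\cO)$, which has finite covolume, forcing $\text{vol}(\cF)<\infty$. The contradiction yields infinite index.

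For the normalizer, let $\Gamma_\infty\subseteq\text{PSL}_2(\cO)$ denote the stabilizer of $\infty$, i.e., the upper-triangular subgroup; it is generated by the shifts $s(\alpha)$ and the diagonal unit matrices, so $\Gamma_\infty\subseteq\text{PE}_2(\cO)$. If $g\in\text{PSL}_2(\cO)$ normalizes $\text{PE}_2(\cO)$, then $g\Gamma_\infty g^{-1}\subseteq\text{PE}_2(\cO)$ is a rank-$2$ parabolic subgroup fixing $g\cdot\infty$. It suffices to show $g\cdot\infty\in\text{PE}_2(\cO)\cdot\infty$: given $h\in\text{PE}_2(\cO)$ with $h\cdot\infty=g\cdot\infty$, the element $h^{-1}g$ stabilizes $\infty$ and so lies in $\Gamma_\infty\subseteq\text{PE}_2(\cO)$, whence $g\in\text{PE}_2(\cO)$. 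For this orbit containment, the boundary points carrying a rank-$2$ parabolic stabilizer in $\text{PE}_2(\cO)$ are precisely the cusp points of $\text{PE}_2(\cO)\backslash\bH$. By Theorem \ref{thm:intro}, the only ideal vertex of $\cF$ admitting a finite-volume horoball cross-section is $\infty$: the rest of $\overline{\cF}\cap\partial\bH$ consists of the open two-dimensional funnel regions above the lattice gaps (infinite-volume flares, not cusps), together with the measure-zero tangencies between unit hemispheres at $\cO$ (which carry no rank-$2$ parabolic stabilizer). Hence $\text{PE}_2(\cO)\backslash\bH$ has a unique cusp at $\infty$, proving the required orbit containment.

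The main obstacle is the cusp count in the normalizer argument: rigorously ruling out extra rank-$2$ parabolic fixed points at the bottom of $\cF$, beyond the $\text{PE}_2(\cO)$-orbit of $\infty$. Translating the geometric content of Theorem \ref{thm:intro} --- unit hemispheres at $\cO$ flanked by open gap funnels --- into the group-theoretic statement that $\infty$ is the unique cusp of $\text{PE}_2(\cO)\backslash\bH$ is the delicate step; the infinite-index half, by contrast, reduces to an immediate volume comparison once the cylinder $V\times(0,\infty)$ is located inside $\cF$.
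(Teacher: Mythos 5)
Your infinite-index argument is correct but genuinely different from the paper's. You locate an open set $V\subseteq F$ outside every closed unit disc centered on $\cO$ (valid: $|\Delta|>12$ is exactly the condition that the covering radius of $\cO$ exceeds $1$), conclude from Theorem \ref{thm:intro} that $\cF$ contains the infinite-volume chimney $V\times(0,\infty)$, and compare with the finite covolume of the Bianchi group. Two remarks: strictly speaking you need the statement established \emph{inside} the proof of Theorem \ref{thm:intro} (every $\zeta$ outside all closed unit discs is outside \emph{every} isometric hemisphere of $\text{PE}_2(\cO)$, not merely the face-contributing ones), though this is what determines $\cF$ anyway; and your route imports the nontrivial classical fact that $\text{PSL}_2(\cO)$ has finite covolume. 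The paper instead exhibits infinitely many right cosets directly: for $\lambda/\mu$ outside all closed isometric hemispheres, $g$ with first column $(\lambda,\mu)^{T}$ is the unique element of $\text{PE}_2(\cO)g$ of minimal $|\mu|$, so distinct such $\lambda/\mu$ give distinct cosets. That argument is self-contained and also hands you explicit coset representatives.

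The normalizer half has a genuine gap, and it sits exactly where you flagged it. Your reduction is fine: if $g$ normalizes $\text{PE}_2(\cO)$ then $g\Gamma_\infty g^{-1}\leq\text{PE}_2(\cO)$ is rank-$2$ parabolic at $g\cdot\infty$, and $g\cdot\infty\in\text{PE}_2(\cO)\cdot\infty$ would finish. But the claim that every rank-$2$ parabolic fixed point of $\text{PE}_2(\cO)$ lies in $\text{PE}_2(\cO)\cdot\infty$ is asserted, not proved: the parenthetical that the tangency points of adjacent unit hemispheres ``carry no rank-$2$ parabolic stabilizer'' is precisely the hard content. These tangencies are not vacuous --- for $\Delta=-15$ the hemispheres at $0$ and $\tau$ are tangent at $\tau/2$, and for $\Delta=-16$ those at $0$ and $2i$ are tangent at $i$; in both cases the tangency point is a rank-$2$ parabolic fixed point of $\text{PSL}_2(\cO)$ \emph{not} in the orbit of $\infty$, so ruling out a rank-$2$ stabilizer in $\text{PE}_2(\cO)$ requires an actual argument (e.g., a precisely invariant horoball at such a point would have finite-volume quotient, contradicting the infinite volume of $\cF$ inside that horoball coming from the adjacent planar gap). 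Also, testing individual ideal vertices of $\cF$ for ``finite-volume horoball cross-sections'' is not quite the right criterion, since one cusp of the quotient can meet $\cF$ in several ideal vertices. The paper sidesteps all of this with a computation: normalizing $g$ so that $\lambda/\mu=g(\infty)$ lies outside every open isometric hemisphere, it notes that $gs(\alpha)g^{-1}$ sends $\infty$ to $\lambda/\mu-1/(\alpha\mu^2)$, which for suitable $\alpha$ lies outside every \emph{closed} isometric hemisphere; since any element of $\text{PE}_2(\cO)$ with nonzero lower-left entry sends $\infty$ to the center of one of its isometric hemispheres, $gs(\alpha)g^{-1}\notin\text{PE}_2(\cO)$. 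You should either adopt that computation or supply the horoball-volume argument for the tangency points before your version can stand.
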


(The proofs of Theorem \ref{thm:intro} and Corollary \ref{cor:intro1} appear as the proof of Theorem 3.1.5 in the author's dissertation \cite{martin}.)

Notably, if $\cO$ is any ring of integers other than non-Euclidean, imaginary quadratic \cite{vaser} or if $n$ is anything other than $2$ \cite{bass}, $\text{PE}_n(\cO)=\text{PSL}_n(\cO)$.

Nica proves non-normality of $\text{PE}_2(\cO)$ but does not compute the normalizer. Also, the approach in this paper is simpler. There is no need for the ``special unimodular pairs" or solutions to Pell-type equations used in \cite{nica}. 

Stange has also reproved the infinite-index assertion of Corollary \ref{cor:intro1} by way of her Schmidt arrangements \cite{stange}. And another proof of both the infinite-index and non-normal assertions has been given recently in \cite{senia2}. Sheydvasser computes a fundamental polyhedron for the group generated by integer translations and reflections over unit hemispheres in $\bH$ centered on integers from an imaginary quadratic field or certain quaternion algebras. By showing that this group is commensurable with $\text{PSL}_2(\cO)$, Sheydvasser recovers Nica's theorem over a larger set of rings. 

A Ford domain for $\text{PE}_2(\cO)$ also gives a group presentation via Poincar\'{e}s polyhedron theorem. Generators are face-pairing matrices and relations are defined by reflections and edge cycles. (See Section 6.2 of \cite{fine2} for a description and \cite{maskit} for a proof.) This produces the following presentation of $\text{PE}_2(\cO)$, originally due to Fine \cite{fine} (derived largely from Cohn's main lemma in \cite{cohn2}).

\begin{corollary}\label{cor:intro2}Let $\tau=\sqrt{\Delta}/2$ or $(1+\sqrt{\Delta})/2$ depending on $\Delta\,\emph{mod}\,2$. If $|\Delta| > 12$, $$\emph{PE}_2(\cO)=\big\langle r, s(1), s(\tau)\;\big|\;s(1)s(\tau)s(1)^{-1}s(\tau)^{-1},\,r^2,\,(rs(1))^3\big\rangle.$$\end{corollary}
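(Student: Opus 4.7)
The plan is to apply Poincar\'e's polyhedron theorem to the Ford domain $\cF$ given by Theorem \ref{thm:intro}. Taking $\overline{F}$ as a parallelogram with vertices $\pm\tfrac12\pm\tfrac{\tau}{2}$, the boundary of $\cF$ consists of four vertical lateral walls together with finitely many spherical caps, each lying on a unit hemisphere $S_{rs(-\alpha)}$ for some $\alpha\in\cO$ near $\overline{F}$.

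The face-pairings come in three kinds: the two pairs of opposite lateral walls are paired by $s(1)$ and $s(\tau)$ respectively; the cap on $S_r$ is paired with itself by $r$; and each remaining spherical cap is paired with another by a word in $r$ and lattice shifts. Since $\cO=\bZ\langle 1,\tau\rangle$, every lattice shift factors as a word in $s(1)^{\pm1}$ and $s(\tau)^{\pm1}$, so every face-pairing lies in $\langle r,s(1),s(\tau)\rangle$. Poincar\'e's theorem then presents $\text{PE}_2(\cO)$ on these three generators modulo the relations arising from face-pairing involutions and edge cycles.

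Next I would enumerate the relations. The self-pairing of the cap on $S_r$ gives $r^2=1$. The vertical edge where two adjacent lateral walls meet cycles back under $s(1)s(\tau)s(1)^{-1}s(\tau)^{-1}$, producing the commutator relation. The remaining edges lie on arcs $S_r\cap S_{rs(\pm1)}$ over $\text{Re}(\zeta)=\pm\tfrac12$; these arcs are the elliptic axes of $rs(\pm1)$, which have order three by a direct trace computation, yielding the relation $(rs(1))^3=1$ (the version at $-1$ being equivalent after conjugation). The hypothesis $|\Delta|>12$ forces $|\tau|\geq 2$, so the unit hemispheres at $\pm\tau$ and all other non-real lattice neighbors are tangent to or disjoint from $S_r$ and contribute no additional edge arcs. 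Every other horizontal edge cycle is conjugate by a lattice shift to one already listed.

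The main obstacle is the geometric bookkeeping just sketched: verifying, under $|\Delta|>12$, that the only intersection arcs contributing edges to $\cF$ are $S_r\cap S_{rs(\pm1)}$ and their lattice translates, and that no further relations arise when the spherical-cap face-pairings are composed with the lattice shifts needed to return them to $\cF$. Once this is confirmed, Poincar\'e's theorem assembles exactly the three relations of Corollary \ref{cor:intro2}.
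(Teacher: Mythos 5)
Your overall strategy---apply Poincar\'e's polyhedron theorem to the Ford domain singled out by Theorem \ref{thm:intro}---is the same as the paper's, but your choice of fundamental polygon breaks the argument for odd discriminants. When $\Delta\equiv 1\pmod 4$ the parallelogram with vertices $\pm\tfrac12\pm\tfrac{\tau}{2}$ is \emph{not} the Voronoi cell of $0$: it contains points strictly closer to $1$ than to $0$ (the point of the wall over $\{\tfrac12+y\tau\}$ nearest to the lattice point $1$ lies at distance roughly $\tfrac12$ from it and has real part exceeding $\tfrac12$), so the unit hemisphere at $1$---and symmetrically at $-1$---contributes a genuine face of $\cF$ over your polygon, not merely a tangency. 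Worse, for $\Delta=-15$ the top and bottom walls sit at height $\sqrt{15}/4<1$, so the hemispheres at $0$ and at $\pm\tau$ all cut through those walls and contribute further faces and edges; your inference that $|\tau|\geq 2$ makes the hemispheres at $\pm\tau$ irrelevant conflates ``disjoint from or tangent to $S_r$'' with ``contributes no face,'' which are different questions once $\overline{F}$ reaches within distance $1$ of those lattice points. Consequently the claims that the only edge arcs are $S_r\cap S_{rs(\pm1)}$ over $\Re(\zeta)=\pm\tfrac12$ and that every other cycle is a lattice translate of one already listed are false for these $\Delta$: the extra caps are not paired by single clean matrices, the edge-cycle list is incomplete, and Poincar\'e's theorem returns a presentation that is not visibly the three-relation one without a Tietze reduction you have not supplied. (Your even-$\Delta$ case is fine, because there your rectangle \emph{is} the Voronoi cell.)

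The fix is the paper's choice of $F$ in Definition \ref{def:bianchi}: take $F$ to be the Voronoi cell of $0$ in the lattice $\cO$, a hexagon when $\Delta$ is odd. By construction every point of $F$ is at least as close to $0$ as to any other lattice point, so Theorem \ref{thm:intro} guarantees the hemisphere at $0$ is the \emph{only} spherical face; $r$ self-pairs it, the pairs of parallel walls are paired by shifts, and the only edge cycles are the vertical ones (yielding the commutator relation, after eliminating the redundant third shift in the hexagonal case) and the two arcs over $\Re(\zeta)=\pm\tfrac12$ (yielding $(rs(1))^3$, the exponent coming from the angle sum $2\cdot\tfrac{\pi}{3}$ rather than from the trace of $rs(1)$, though the two agree here). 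Either switch to this polygon or carry out the additional face and edge bookkeeping honestly for odd $\Delta$.
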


Finally, we use the Ford domain of $\text{PE}_2(\cO)$ to prove that $\text{PSL}_2(\cO)$ factors nontrivially as a free product with amalgamation. In 1970 \cite{serre}, Serre showed that $\text{PSL}_2(\cO)$ does not have the property FA as defined in \cite{serre2} (see the comment just before Chapter 2). For finitely generated groups, not satisfying FA is strictly weaker than being a nontrivial amalgam, and it was conjectured that the stronger property holds (as discussed in Chapter 1 of \cite{fine2}). This was confirmed by Fine and Frohman in 1988. It is a result which Fine describes in Section 1.3 of \cite{fine2} as answering one of the foremost questions addressed in his research monograph. Their proof relies on the theory of fundamental groups of factor manifolds. They find a nicely embedded, incompressible, separating two-sided surface in the factor manifold of $\text{PSL}_2(\cO)$, which gives an amalgam splitting by the Seifert-Van Kampen theorem. 

We are able to avoid such complexity. Matrices that pair faces lying at least partially above the Euclidean plane $t=2/3$ (shown translucent, white in Figure \ref{fig:1}) generate $\text{PE}_2(\cO)$ by Theorem \ref{thm:intro}. Those from faces lying at least partially below $t=2/3$ form a group we call $\Gamma$. It is straightforward to check that the resulting amalgam is nontrivial.

\begin{corollary}\label{cor:intro3}If $|\Delta| > 12$, then $\emph{PSL}_2(\cO)\simeq \emph{PE}_2(\cO)*_{N}\Gamma$ is a nontrivial amalgam, where $N$ is generated by $r$, $s(1)$, and $s(\tau)rs(\tau)^{-1}$ and $\Gamma$ depends on $\cO$.\end{corollary}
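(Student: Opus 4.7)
The plan is to decompose the Ford domain $\cF$ of $\text{PSL}_2(\cO)$ along the horosphere $\{t=2/3\}$ and apply a combination-theorem argument. Set $\cF_\pm=\cF\cap\{t\gtrless 2/3\}$ and $\cF_0=\cF_+\cap\cF_-$. First I would verify a norm-form estimate: for $|\Delta|>12$, every non-unit $\beta\in\cO$ has $|\beta|^2\geq 4$. This is direct from the norm form $a^2+b^2|\Delta|/4$ (even $\Delta$) or $a^2+ab+b^2(1+|\Delta|)/4$ (odd $\Delta$); in each case the smallest non-unit value is $4$ once $|\Delta|\geq 15$. Therefore the isometric hemisphere of any element with non-unit $\beta$ has radius at most $1/2$ and lies strictly below $\cF_0$, and by Theorem \ref{thm:intro}, $\cF_+$ is precisely the Ford domain of $\text{PE}_2(\cO)$ truncated at $t=2/3$.

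Accordingly, the face-pairings of $\cF$ separate by where their faces sit. Those with faces meeting $\cF_+$ are the shifts $s(\alpha)$ and unit-hemisphere inversions $rs(-\alpha)$, which generate $\text{PE}_2(\cO)$ by Corollary \ref{cor:intro2}. Those with faces meeting $\cF_-$ generate $\Gamma$ by definition and include the same shared generators together with inversions through smaller hemispheres. To lift the observation $\text{PSL}_2(\cO)=\langle\text{PE}_2(\cO)\cup\Gamma\rangle$ to an amalgamated product, I would invoke a Klein--Maskit combination theorem, using the horoball $\{t>2/3\}$ (and its $\text{PSL}_2(\cO)$-translates) as a precisely invariant set for $\text{PE}_2(\cO)$ and its complement as one for $\Gamma$; equivalently, one can construct a Bass--Serre tree whose vertices are $\text{PSL}_2(\cO)$-translates of $\cF_\pm$ and whose edges are translates of $\cF_0$. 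The amalgamating subgroup $N=\text{PE}_2(\cO)\cap\Gamma$ is then read off by treating $\cF_0$—topologically $\overline F$ with open disks of radius $\sqrt{5}/3$ removed at each $\cO$-point through which a unit hemisphere passes—as a two-dimensional side-pairing domain, whose pairings are $s(1)$ (opposite vertical walls of $\overline F$), $r$ (the boundary arc around $0$, on which $r$ acts isometrically as $\zeta\mapsto-\bar\zeta$, since $r$ preserves the sphere $|\zeta|^2+t^2=1$), and $s(\tau)rs(\tau)^{-1}$ (the analogous arc around $\tau$).

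Nontriviality is then immediate. On the $\text{PE}_2(\cO)$ side, $s(\tau)\notin N$ because $s(\tau)$ translates $\cF_0$ to the neighboring horosphere piece above $\tau+\overline F$ and so does not stabilize the edge of the amalgam tree labelled $\cF_0$. On the $\Gamma$ side, face-pairings of $\cF$ through non-unit hemispheres must exist (since $\bH/\text{PSL}_2(\cO)$ has finite volume while unit hemispheres alone leave gaps in the fundamental polygon for $|\Delta|>12$) and lie in $\Gamma\setminus\text{PE}_2(\cO)$ by Theorem \ref{thm:intro}. The main obstacle I foresee is verifying the precise-invariance hypotheses of the combination theorem—equivalently, acyclicity of the Bass--Serre tree—which requires confirming that the $\text{PSL}_2(\cO)$-orbit of the horosphere $\{t=2/3\}$ cuts $\bH$ cleanly into chunks with no loops in the dual graph.
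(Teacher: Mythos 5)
There is a genuine gap, and it sits exactly where you flag it: the passage from ``$\text{PE}_2(\cO)$ and $\Gamma$ together generate $\text{PSL}_2(\cO)$'' to ``$\text{PSL}_2(\cO)\simeq\text{PE}_2(\cO)*_N\Gamma$.'' The precise-invariance hypothesis you propose to feed into a Klein--Maskit combination theorem is false as stated: the horoball $\{t>2/3\}$ is not even invariant under $\text{PE}_2(\cO)$, since $r$ sends $(\zeta,t)$ to a point of height $t/(|\zeta|^2+t^2)$, which drops below $2/3$ for most $\zeta$. What is actually invariant is the union of $\text{PE}_2(\cO)$-translates of $\cF_+$, a much more complicated set, and verifying that the full $\text{PSL}_2(\cO)$-orbit of $\cF_0$ cuts $\bH$ into pieces whose dual graph is a tree is essentially the incompressible-surface argument of Fine and Frohman --- the very complexity the paper is written to avoid. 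Your nontriviality argument inherits the same problem: ``$s(\tau)$ does not stabilize the edge labelled $\cF_0$'' only shows $s(\tau)\notin N$ if $N$ is already known to be the edge stabilizer of a genuine Bass--Serre tree.

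The paper's route is purely combinatorial and sidesteps all of this. Poincar\'e's polyhedron theorem gives a presentation $\langle G\,|\,R\rangle$ of $\text{PSL}_2(\cO)$ from the Ford domain; the key observation is that two paired faces are either both at least partially above or both at least partially below $t=2/3$, and likewise every edge cycle is consistently classified, so $G=G_a\cup G_b$ and $R=R_a\cup R_b$. The presentation $\langle G_a\cup G_b\,|\,R_a\cup R_b\rangle$ is then formally the pushout of $\langle G_a\,|\,R_a\rangle$ and $\langle G_b\,|\,R_b\rangle$ over $N=\langle G_a\cap G_b\rangle$, i.e.\ the amalgam, with no geometric separation argument needed. (Your preliminary observations --- that non-unit hemispheres have radius at most $1/2$ and hence lie entirely below $t=2/3$, and your identification of the three generators of $N$ from the pairings of $\cF_0$ --- do agree with the paper.) For nontriviality the paper also avoids the tree: it exhibits the surjection $\text{PE}_2(\cO)\to\langle s(\tau)\rangle$ killing $r$ and $s(1)$, which contains $N$ in its kernel, so $N\neq\text{PE}_2(\cO)$. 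If you want to salvage your approach you must either prove the tree/precise-invariance claim in full (reproducing \cite{frohman}) or replace it with the presentation-splitting argument.
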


This is the same amalgam splitting given in \cite{frohman}.

Remark that \cite{fine} and \cite{frohman} only mention Corollaries \ref{cor:intro1} and \ref{cor:intro2} in the context of maximal orders, but it appears that their proofs apply without modification when $\cO$ is a non-maximal order.

\begin{figure}
    \centering
    \includegraphics[width=0.936\textwidth]{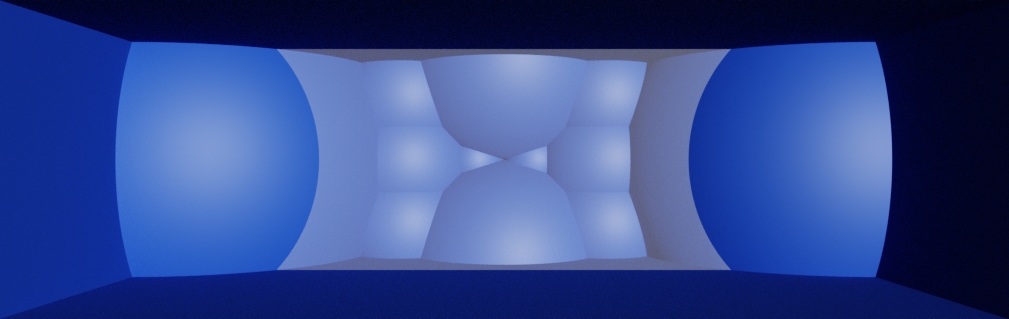}
    \captionsetup{width=0.936\textwidth}\caption{Amalgam splitting of $\text{PSL}_2(\bZ[i\sqrt{10}])$ with the plane $t=2/3$.}
    \label{fig:1}
\end{figure}

\section{Computing the Ford domain}\label{sec:2}

We do not contribute to the study of $\text{PE}_2(\cO)$ or $\text{PSL}_2(\cO)$ when $|\Delta|\leq 12$, the cases where the two groups coincide. Our aim is to compute $\cF$ for $\text{PE}_2(\cO)$ when $|\Delta|>12$. Two lemmas are needed. The first is well-known and straightforward to check.

\begin{lemma}\label{lem:mag}If $|\Delta| > 12$ and $\alpha\in\cO$ is not $0$, $1$, or $-1$, then $|\alpha|\geq 2$.\qed\end{lemma}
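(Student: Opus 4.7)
The plan is to reduce the statement to a finite computation in a fixed $\bZ$-basis of $\cO$. Let $\tau=\sqrt{\Delta}/2$ or $(1+\sqrt{\Delta})/2$ according to the parity of $\Delta$, as in Corollary \ref{cor:intro2}. A brief check using the conductor of $\cO$ over the maximal order confirms that $\{1,\tau\}$ is a $\bZ$-basis of $\cO$ whether or not $\cO$ is maximal, so every $\alpha\in\cO$ can be written uniquely as $\alpha=a+b\tau$ with $a,b\in\bZ$.

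Before splitting cases I would observe that any imaginary quadratic discriminant satisfies $\Delta\equiv 0$ or $1\pmod 4$, so the hypothesis $|\Delta|>12$ strengthens to $|\Delta|\geq 16$ when $\Delta$ is even and $|\Delta|\geq 15$ when $\Delta$ is odd. These are the sharp numerical inputs the argument will need.

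The proof then splits on $b$. If $b=0$, then $\alpha=a\in\bZ$ and the hypothesis $\alpha\notin\{0,\pm 1\}$ gives $|a|\geq 2$ immediately. If $b\neq 0$, I compute $|\alpha|^2$ directly from the basis. For $\Delta$ even, $\tau=i\sqrt{|\Delta|}/2$ is purely imaginary, so
\[
|\alpha|^2=a^2+b^2|\Delta|/4\geq |\Delta|/4\geq 4.
\]
For $\Delta$ odd, $\tau=\tfrac{1}{2}+\tfrac{i}{2}\sqrt{|\Delta|}$, and
\[
|\alpha|^2=(a+b/2)^2+b^2|\Delta|/4.
\]
When $|b|\geq 2$, the second summand alone exceeds $|\Delta|\geq 15$; when $|b|=1$, the integrality of $a$ forces $(a+b/2)^2\geq 1/4$, so $|\alpha|^2\geq (1+|\Delta|)/4\geq 4$. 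In every sub-case $|\alpha|\geq 2$.

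There is no genuine obstacle in this argument; it is a routine case analysis once one passes to the $\{1,\tau\}$-coordinates. The only mildly delicate point is confirming that $\{1,\tau\}$ remains a basis for non-maximal orders, which is a parity check comparing the conductor with $\Delta_K$ in the two subcases $\Delta_K\equiv 0,1\pmod 4$.
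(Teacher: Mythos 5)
Your proof is correct and complete. The paper offers no argument for this lemma (it is declared ``well-known and straightforward to check'' and stated with an immediate \qed), and your case analysis in the basis $\{1,\tau\}$, using $\Delta\equiv 0,1\pmod 4$ to sharpen $|\Delta|>12$ to $|\Delta|\geq 16$ or $15$, is exactly the routine verification being left to the reader.
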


Recall notation in (\ref{eq:2}). In all studies of $\text{PE}_2(\cO)$ of which the author is aware, its elements are written in the form $s(\alpha_n)r\cdots s(\alpha_1)r$ where $\alpha_2,...,\alpha_{n-1}\neq 0,1,-1$. This may have originated in \cite{cohn} as Nica calls it Cohn's standard form. It differs slightly, though nontrivially, from Lemma \ref{lem:form} below. Our proof of Theorem \ref{thm:intro} does not work when Cohn's form is used in place of the following.

\begin{lemma}\label{lem:form}If $|\Delta| > 4$, every matrix in $\emph{PE}_2(\cO)$ is equal to a product of the form $s(\alpha_n)r\cdots s(\alpha_1)rs(\alpha_0)$ for some $\alpha_0,...,\alpha_n\in\cO$ with $\alpha_1,...,\alpha_{n-1}\neq 0,1,-1$.\end{lemma}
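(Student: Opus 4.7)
The plan is to reduce the lemma to two ingredients: (1) under the hypothesis $|\Delta|>4$, the group $\text{PE}_2(\cO)$ is generated by $r$ together with the shifts $\{s(\alpha):\alpha\in\cO\}$; and (2) a small set of local rewrites that eliminate an interior $\alpha_i\in\{0,\pm 1\}$ while strictly decreasing the number of $r$'s in the word. Induction on this count then completes the argument.

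For the first ingredient, note that $|\Delta|>4$ excludes the Gaussian and Eisenstein orders, so the unit group of $\cO$ is $\{\pm 1\}$. Hence the diagonal elementary matrices reduce to $\pm I$, which are trivial in $\text{PSL}_2(\cO)$. A direct $2\times 2$ calculation gives $rs(-\beta)r=\begin{bmatrix}1 & 0\\ \beta & 1\end{bmatrix}$ in $\text{PSL}_2(\cO)$ (the generic lower-triangular elementary matrix), so the four flavors of elementary generator reduce to $\{r\}\cup\{s(\alpha):\alpha\in\cO\}$. Applying $s(\alpha)s(\beta)=s(\alpha+\beta)$ and $r^2=I$ to any word in these generators, and padding with $s(0)=I$ at the endpoints as needed, yields an alternating expression $s(\alpha_n)rs(\alpha_{n-1})r\cdots rs(\alpha_0)$.

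For the second ingredient, I would verify by direct matrix computation the three identities
\begin{equation*}
rs(0)r=I,\qquad rs(1)r=s(-1)rs(-1),\qquad rs(-1)r=s(1)rs(1).
\end{equation*}
When an interior entry $\alpha_i$ lies in $\{0,\pm 1\}$, the corresponding substitution converts the block $rs(\alpha_i)r$, and the adjacent shifts absorb the resulting $s(\mp 1)$ factors via the addition rule (or, for $\alpha_i=0$, the whole block collapses). Each such step strictly decreases the number of $r$'s, so induction on that count terminates. Modifying $\alpha_0$ or $\alpha_n$ is harmless because endpoints are unconstrained; modifying an interior neighbor may reintroduce a forbidden value, but a subsequent rewrite handles it. The main obstacle is conceptual rather than computational: one must resist trying to track the sequence $(\alpha_i)$ directly and instead recognize that the length (number of $r$'s) is the correct monovariant. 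Everything else is a routine verification of identities in $\text{SL}_2(\cO)$ that descend to $\text{PSL}_2(\cO)$.
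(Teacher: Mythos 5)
Your proposal is correct and follows essentially the same route as the paper: reduce to the generators $r$ and $s(\alpha)$ using that the units are $\pm 1$ when $|\Delta|>4$, then eliminate interior entries in $\{0,\pm 1\}$ via the identities $rs(0)r=I$ and $rs(\pm 1)r=s(\mp 1)rs(\mp 1)$. Your explicit use of the number of $r$'s as a decreasing monovariant just makes the termination argument, which the paper leaves implicit, more precise.
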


\begin{proof}If $|\Delta| > 4$, the only units in $\cO$ are $1$ and $-1$. In particular, $s(\alpha)$ for $\alpha\in\cO$ account for all upper triangular matrices in $\text{PSL}_2(\cO)$, and $rs(\alpha)r$ account for all lower triangular matrices. Thus $r$ and $s(\alpha)$ generate $\text{PE}_2(\cO)$ for $\alpha\in\cO$ (or just $\alpha$ from a $\bZ$-basis for $\cO$). Moreover, since $r$ has order two and $s(\alpha)s(\beta)=s(\alpha+\beta)$, any matrix in $\text{PE}_2(\cO)$ can be written as $s(\alpha_n)r\cdots s(\alpha_1)rs(\alpha_0)$ for some $\alpha_0,...,\alpha_n\in\cO$. To see that $\alpha_i\neq 0,1,-1$ is possible if $0<i<n$, observe that $s(\alpha_{i+1})rs(0)rs(\alpha_{i-1})=s(\alpha_{i+1}+\alpha_{i-1})$ when $\alpha_i=0$, and $s(\alpha_{i+1})rs(\pm1)rs(\alpha_{i-1}) = s(\alpha_{i+1}\mp 1)rs(\alpha_{i-1}\mp 1)$ when $\alpha_i=\pm 1$.\end{proof}

We can now prove Theorem \ref{thm:intro}.

\begin{theorem}\label{thm:main}The only Euclidean hemispheres that contribute a face to the Ford domain of $\emph{PE}_2(\cO)$ are those of radius $1$ centered on elements of $\cO$.\end{theorem}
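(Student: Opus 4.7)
The plan is to use Lemma \ref{lem:form} to write any $g \in \text{PE}_2(\cO)$ as $g = s(\alpha_n) r \cdots r s(\alpha_0)$ with $|\alpha_i| \geq 2$ for $1 \leq i \leq n-1$ (invoking Lemma \ref{lem:mag}), then analyze the isometric hemisphere $S_g$ by cases on $n$.

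First I would compute the matrices $g_k = s(\alpha_k) r \cdots r s(\alpha_0)$ recursively. A direct induction shows that if $g_k$ has entries $a_k, b_k, c_k, d_k$ (in reading order), then $c_k = a_{k-1}$, $d_k = b_{k-1}$, and both $a_k$ and $b_k$ satisfy the continuant recursion $x_k = \alpha_k x_{k-1} - x_{k-2}$ with $(a_{-1}, a_0) = (0, 1)$ and $(b_{-1}, b_0) = (1, \alpha_0)$. By (\ref{eq:1}), $S_g$ thus has radius $1/|a_{n-1}|$ and center $-b_{n-1}/a_{n-1}$. The case $n = 0$ gives $\beta(g) = 0$ (no hemisphere), and $n = 1$ produces precisely a unit hemisphere centered at $-\alpha_0 \in \cO$, of the required type. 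The substantive case is $n \geq 2$: I would show $S_g$ is strictly contained in the unit hemisphere at $-\alpha_0$, and therefore contributes no face.

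Set $e_k = \alpha_0 a_k - b_k$, which satisfies the same linear recursion as $a_k$ but with initial data $(e_{-1}, e_0) = (-1, 0)$. Then the distance from the center of $S_g$ to $-\alpha_0$ equals $|e_{n-1}|/|a_{n-1}|$. The triangle inequality, together with the strict bound $|\zeta(P) + b_{n-1}/a_{n-1}| < 1/|a_{n-1}|$ that holds on the open hemisphere $S_g$ (because $t(P) > 0$ there), gives
$$|\zeta(P)+\alpha_0|^2 + t(P)^2 < \left(\frac{1+|e_{n-1}|}{|a_{n-1}|}\right)^2$$
for every $P \in S_g$. Strict containment of $S_g$ in the unit hemisphere at $-\alpha_0$ is thus reduced to the estimate $|a_k| \geq 1 + |e_k|$ for all $k \geq 1$.

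The main obstacle is proving this bound: it is tight (equality occurs when $\alpha_i = 2$ for all $i$), so the naive induction on $|a_k| - |e_k|$ fails because the recursion reintroduces uncontrolled $|a_{k-2}|$ and $|e_{k-2}|$ terms. My approach is a joint induction on the two statements $|a_k| \geq |a_{k-1}| + 1$ and $|a_k| \geq 1 + |e_k|$. The first follows directly from the recursion and $|\alpha_k| \geq 2$. For the second I would use the identity $|a_k e_{k-1} - a_{k-1} e_k| = 1$ (an immediate consequence of $\det g_k = 1$) to obtain $|a_{k-1}||e_k| \leq |a_k||e_{k-1}| + 1$, then rearrange using both inductive hypotheses to conclude $|e_k| \leq |a_k| - 1$.
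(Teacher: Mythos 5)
Your proof is correct, but it takes a genuinely different route from the paper's. The paper fixes a single point $\zeta$ outside every closed unit disc centered on $\cO$, runs the continued-fraction recursion $\zeta_1=\zeta+\alpha_0$, $\zeta_{i+1}=\alpha_i-1/\zeta_i$, shows inductively via Lemma \ref{lem:mag} that every $|\zeta_i|>1$, and concludes $|\alpha-\beta\zeta|=|\zeta_n\cdots\zeta_1|>1$, so $\zeta$ lies outside $S_g$ and no non-unit hemisphere can contribute a face. You instead work with the matrix entries through the continuant recursion and prove the stronger, more explicitly geometric statement that for $n\geq 2$ the hemisphere $S_g$ sits strictly inside the unit hemisphere at $-\alpha_0$, reducing this to the tight inequality $|a_k|\geq 1+|e_k|$, which you get from a joint induction with $|a_k|\geq|a_{k-1}|+1$ and the determinant identity $|a_ke_{k-1}-a_{k-1}e_k|=1$. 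Both arguments rest on the same two ingredients --- the normal form of Lemma \ref{lem:form} with its trailing $s(\alpha_0)$, and Lemma \ref{lem:mag} --- and they certify the same underlying fact, since the paper's induction only ever uses that $\zeta$ avoids the one unit disc at $-\alpha_0$. The paper's version buys brevity: one induction, no case split on $n$, no determinant identity. Yours buys quantitative content ($|a_k|\geq k+1$, so every non-unit isometric hemisphere has radius at most $1/2$) and an explicit nesting of hemispheres. One small point to patch when writing it up: the strict inequality in your display degenerates to equality if $e_{n-1}=0$, since the triangle-inequality slack vanishes when the two centers coincide; this case cannot actually occur for $n\geq 2$ (if $e_{n-1}=0$ the identity $|a_{n-1}e_{n-2}-a_{n-2}e_{n-1}|=1$ would force $|a_{n-1}|=1$, contradicting $|a_{n-1}|\geq 2$), and even if it did, the radius bound $1/|a_{n-1}|\leq 1/2<1$ gives the containment directly.
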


\begin{proof}Let $\zeta\in\bC$ lie outside each closed unit disc centered on an integer in $\cO$. The claim follows if we can show $\zeta$ is not under any isometric hemisphere from $\text{PE}_2(\cO)$. 

Fix $g\in\text{PE}_2(\cO)$ with nonzero bottom-left entry. By Lemma \ref{lem:form}, we may write $g=s(\alpha_n)r\cdots s(\alpha_1)rs(\alpha_0)$ for some $\alpha_0,...,\alpha_n\in\cO$ with $\alpha_1,...,\alpha_{n-1}\neq 0,1,-1$. Let $\zeta_1=\zeta+\alpha_0$. Note that $\zeta_1$ lies outside the unit disc centered on $0$, so $|\zeta_1|>1$. For $i=1,...,n-1$, let $\zeta_{i+1}=\alpha_i-1/\zeta_i$. This gives \begin{equation}\label{eq:3}s(\alpha_i)r\begin{bmatrix}\zeta_i\\ 1\end{bmatrix}=\zeta_i\begin{bmatrix}\zeta_{i+1}\\ 1\end{bmatrix}.\end{equation} Assume $|\zeta_i|>1$ for induction. Then $\zeta_{i+1}$ lies inside the open unit disc centered on $\alpha_i$. Since $\alpha_i\neq 0,1,-1$, this implies $|\zeta_{i+1}|>1$ by Lemma \ref{lem:mag}.

Let $-\beta$ and $\alpha$ be the bottom-row entries of $g$. Our goal is to show $|\alpha/\beta-\zeta| > 1/|\beta|$. First observe that $\beta\neq 0$ forces $n\geq 1$. Next, the bottom-row entries of $s(\alpha_n)r$ are $1$ and $0$, so $$\alpha-\beta\zeta=\begin{bmatrix}-\beta & \alpha\end{bmatrix}\begin{bmatrix}\zeta \\ 1\end{bmatrix}=\begin{bmatrix}1 & 0\end{bmatrix}s(\alpha_{n-1})r\cdots s(\alpha_1)rs(\alpha_0)\begin{bmatrix}\zeta \\ 1\end{bmatrix}=\zeta_n\cdots \zeta_1,$$ where the last equality uses (\ref{eq:3}). Since $|\zeta_n\cdots\zeta_1|>1$, we are done.\end{proof}

\begin{corollary}\label{cor:nica}For $|\Delta| > 12$, $\emph{PE}_2(\cO)$ is an infinite-index subgroup of $\emph{PSL}_2(\cO)$. Furthermore, $\emph{PE}_2(\cO)$ is its own normalizer.\end{corollary}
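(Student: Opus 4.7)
The plan is to derive both assertions from the following consequence of Theorem \ref{thm:main}: the orbit $\text{PE}_2(\cO)\cdot\infty$ is contained in $\bigcup_{\alpha\in\cO}\overline{D}(\alpha,1)\cup\{\infty\}$, where $\overline{D}(\alpha,1)$ is the closed Euclidean unit disc at $\alpha$. For any $h\in\text{PE}_2(\cO)$ not fixing $\infty$, the value $h\cdot\infty$ is the center of the isometric hemisphere $S_{h^{-1}}$, and this center lies under $S_{h^{-1}}$. The proof of Theorem \ref{thm:main} shows that every $\zeta$ in the gap $G:=\bC\setminus\bigcup_{\alpha\in\cO}\overline{D}(\alpha,1)$ is under no isometric hemisphere from $\text{PE}_2(\cO)$. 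Hence $h\cdot\infty\notin G$.

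For the infinite-index assertion: because $|\Delta|>12$, $G$ is nonempty and open, and the set $\{\lambda/\mu\in K:\lambda,\mu\in\cO,\,(\lambda,\mu)=\cO\}=\text{PSL}_2(\cO)\cdot\infty\cap\bC$ is dense in $\bC$ (for instance by taking $\mu\in\cO$ of large norm and $\lambda$ coprime to $\mu$), so some such $p$ lies in $G$. Let $\pi\in\text{PSL}_2(\cO)$ be a parabolic fixing $p$. If $[\text{PSL}_2(\cO):\text{PE}_2(\cO)]$ were finite, pigeonhole among the powers $\pi,\pi^2,\pi^3,\ldots$ would give $\pi^n\in\text{PE}_2(\cO)$ for some $n\geq 1$; but $\pi^n$ is still a nontrivial parabolic fixing $p$, so $p\in\text{PE}_2(\cO)\cdot\infty$, contradicting $p\in G$.

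For the self-normalizing assertion, let $g\in\text{PSL}_2(\cO)$ normalize $\text{PE}_2(\cO)$. Then $g\{s(\alpha):\alpha\in\cO\}g^{-1}\subseteq\text{PE}_2(\cO)$ is a rank-two parabolic subgroup fixing $g\cdot\infty$. Granting for the moment that any point of $\partial\bH$ with such a rank-two parabolic stabilizer in $\text{PE}_2(\cO)$ lies in $\text{PE}_2(\cO)\cdot\infty$, there is $h\in\text{PE}_2(\cO)$ with $h\cdot\infty=g\cdot\infty$, and then $h^{-1}g\in\text{Stab}_{\text{PSL}_2(\cO)}(\infty)$. Since $|\Delta|>12$ forces $\cO^\times=\{\pm 1\}$, this stabilizer equals $\{s(\alpha):\alpha\in\cO\}\subseteq\text{PE}_2(\cO)$. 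So $g=h(h^{-1}g)\in\text{PE}_2(\cO)$.

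The main obstacle is the granted claim in the previous paragraph: Theorem \ref{thm:main} tells us where the contributing isometric hemispheres of $\text{PE}_2(\cO)$ lie, but deducing that $\infty$ is the unique $\text{PE}_2(\cO)$-orbit of rank-two cusps of the Ford domain $\cF$ (so that the remaining boundary of $\overline{\cF}\cap\partial\bH$, namely $\overline{F}\cap G$, forms a two-dimensional funnel carrying no rank-two parabolic stabilizers) requires a short additional geometric argument beyond the theorem itself.
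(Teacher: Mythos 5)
Your reduction of both claims to the containment $\text{PE}_2(\cO)\cdot\infty\subseteq\bigcup_{\alpha\in\cO}\overline{D}(\alpha,1)\cup\{\infty\}$ is sound, and it is the same consequence of Theorem \ref{thm:main} that the paper exploits. The trouble is that both halves of your argument then pass through a claim of the form ``a parabolic fixed point of $\text{PE}_2(\cO)$ lies in $\text{PE}_2(\cO)\cdot\infty$,'' which does not follow from anything you have established. In the index argument, the step ``$\pi^n$ is still a nontrivial parabolic fixing $p$, so $p\in\text{PE}_2(\cO)\cdot\infty$'' is unjustified: for a discrete group a parabolic fixed point need not be equivalent to $\infty$ (already $\text{PSL}_2(\cO)$ has inequivalent cusps when the class number exceeds one), and you give no reason this cannot happen inside $\text{PE}_2(\cO)$. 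That particular gap is repairable in one line: $\pi^{nk}\cdot\infty\to p$ as $k\to\infty$, each term lies in the closed set $\bigcup_{\alpha}\overline{D}(\alpha,1)$, hence $p\notin G$. The normalizer half has the more serious defect, which you flag yourself: the assertion that a boundary point whose stabilizer in $\text{PE}_2(\cO)$ contains a rank-two parabolic group must lie in $\text{PE}_2(\cO)\cdot\infty$ is precisely what is missing, so that half of the corollary is not proved in your write-up.

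The paper sidesteps both issues by staying at the level of matrix entries. For the index: if $g\in\text{PSL}_2(\cO)$ has left column $(\lambda,\mu)^T$ with $\lambda/\mu\in G$, then any $h\in\text{PE}_2(\cO)$ with bottom row $(-\beta,\alpha)$, $\beta\neq 0$, gives $hg$ a bottom-left entry of magnitude $|\mu|\,|\beta|\,|\alpha/\beta-\lambda/\mu|>|\mu|$, so the entry-minimizing representatives of the coset $\text{PE}_2(\cO)g$ are exactly the $s(\gamma)g$, and the infinitely many points $\lambda/\mu\in G$ (taken in a fundamental domain for $\cO$) yield infinitely many distinct cosets --- no parabolic dynamics or pigeonhole needed. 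For the normalizer: after replacing $g\notin\text{PE}_2(\cO)$ by an entry-minimizing representative of its coset (which changes neither the coset nor whether $g$ normalizes), one computes $gs(\alpha)g^{-1}\cdot\infty=\lambda/\mu-1/(\alpha\mu^2)$ and chooses $\alpha$ so that this point lands in $G$; your own containment then gives $gs(\alpha)g^{-1}\notin\text{PE}_2(\cO)$ directly, with no need to decide which parabolic fixed points are $\text{PE}_2(\cO)$-equivalent to $\infty$. I recommend replacing your normalizer paragraph with this computation rather than attempting to prove the rank-two cusp claim.
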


\begin{proof}Let $g\in\text{PSL}_2(\cO)$ have top- and bottom-left entries $\lambda$ and $\mu$. If $\lambda/\mu$ is not under the closure of any isometric hemisphere of $\text{PE}_2(\cO)$, then $g$ is the unique matrix in $\text{PE}_2(\cO)M$ that has minimal bottom-left entry magnitude $|\mu|$. Since there are infinitely many such $\lambda/\mu$ when $|\Delta|>12$, there are infinitely many right cosets.

The same idea proves the stabilizer claim: Take any $g\in\text{PSL}_2(\cO)$ that is not in $\text{PE}_2(\cO)$, with $\lambda$ and $\mu$ as before. Assume without loss of generality that $\lambda/\mu$ is not under any (open) isometric hemisphere from $\text{PE}_2(\cO)$. The top- and bottom-left entries of $gs(\alpha)g^{-1}$ are $-\alpha\lambda\mu+1$ and $-\alpha\mu^2$. Their ratio is $\lambda/\mu-1/\alpha\mu^2$, which is not under any isometric hemisphere of $\text{PE}_2(\cO)$ for the appropriate choice of $\alpha$. Thus $g\text{PE}_2(\cO)g^{-1}\neq\text{PE}_2(\cO)$.\end{proof}

Next we recover Fine's presentation of $\text{PE}_2(\cO)$ \cite{fine}.

\begin{corollary}\label{cor:fine}Let $\tau=\sqrt{\Delta}/2$ or $(1+\sqrt{\Delta})/2$ depending on $\Delta\,\emph{mod}\,2$. If $|\Delta| > 12$, $$\emph{PE}_2(\cO)=\big\langle r, s(1), s(\tau)\;\big|\;s(1)s(\tau)s(1)^{-1}s(\tau)^{-1},\,r^2,\,(rs(1))^3\big\rangle.$$\end{corollary}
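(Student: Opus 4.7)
The plan is to apply Poincar\'e's polyhedron theorem (Section 6.2 of \cite{fine2} or \cite{maskit}) to the Ford domain $\cF$ of $\text{PE}_2(\cO)$ supplied by Theorem \ref{thm:main}. Take $F\subset\bC$ to be the Voronoi cell of $0$ in $\cO$, which is symmetric under $\zeta\mapsto-\bar\zeta$. For every $\zeta\in F$ and every $\alpha\in\cO\setminus\{0\}$ one has $|\zeta|\leq|\zeta-\alpha|$, so the unit hemisphere at $0$ dominates every other unit hemisphere above $\overline{F}$. By Theorem \ref{thm:main}, this hemisphere is therefore the only hemispherical face of $\cF$; the remaining faces are the vertical walls above $\partial F$.

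The face-pairings are easy to read off. Opposite pairs of walls are paired by translations in $\cO$, all of which lie in $\langle s(1),s(\tau)\rangle$. The hemispherical face is preserved by $r$, which acts on it as $\zeta\mapsto-\bar\zeta$; subdividing it along the fixed arc $\text{Re}(\zeta)=0$ of $r$ gives two halves paired by the involution $r$. Hence every face-pairing lies in $\langle r,s(1),s(\tau)\rangle$, recovering the generating set of Lemma \ref{lem:form}.

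Three families of edge cycles supply the relations. Cycles around the vertical edges of $\cF$ above the vertices of $F$, which ascend to $t=\infty$, produce the commutator $s(1)s(\tau)s(1)^{-1}s(\tau)^{-1}$---directly as a single $4$-cycle in the rectangular case, or as a consequence of $3$-cycles at adjacent hexagonal vertices. The fixed arc of $r$ on the hemispherical face is a self-paired edge producing $r^2$. The arc where the hemispherical face meets the wall $\text{Re}(\zeta)=1/2$, namely $\{(1/2,y,t):y^2+t^2=3/4\}$, is fixed pointwise by the order-$3$ elliptic element $rs(-1)$, has dihedral angle $\pi/3$ in $\cF$, and its length-two cycle produces $(rs(1))^3$. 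By Lemma \ref{lem:mag} one has $|\tau|\geq 2$, so the walls perpendicular to $\pm\tau$ (and, in the hexagonal case, $\pm(\tau-1)$) lie at distance at least $1$ from the origin and the hemisphere at $0$ cannot reach them; no further elliptic relation appears.

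The main obstacle is the combinatorial bookkeeping: one must enumerate every edge of $\cF$, verify that each edge-cycle's dihedral-angle sum equals $2\pi/n$ for the claimed relation order $n$, and check that no cycle yields a relation independent of the three above. Lemma \ref{lem:mag} is indispensable here, as it rules out any unit hemisphere from an additional lattice point from intruding into the picture and in particular forbids triple intersections of hemispheres that could create unexpected interior edges. Once these checks are complete, Poincar\'e's theorem yields precisely the presentation asserted in Corollary \ref{cor:fine}.
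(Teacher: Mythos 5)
Your proposal is correct and follows essentially the same route as the paper: choose $F$ to be the Voronoi cell of $0$ so that Theorem \ref{thm:main} leaves only the unit hemisphere at $0$ as a non-vertical face, then read off the generators $r$, $s(1)$, $s(\tau)$ and the three relations from the self-paired hemisphere, the vertical edge cycles (one $4$-cycle or two $3$-cycles), and the length-two cycle where the hemisphere meets $\Re(\zeta)=\pm1/2$. Your identification of the latter cycle's relation as $(rs(1))^3$ via the order-$3$ elliptic element is the correct reading (the paper's proof text writes $(rs(1))^2$, evidently conflating cycle length with relation order), and your explicit use of Lemma \ref{lem:mag} to keep the walls through $\pm\tau$ away from the hemisphere only makes the geometry more transparent.
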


\begin{proof}Let $F$ from Definition \ref{def:bianchi} be the Voronoi cell around $0$ in the lattice $\cO$---a rectangle when $\Delta$ is even and a hexagon when $\Delta$ is odd---so that only the hemisphere centered at $0$ contributes a face of $\cF$. Then the face-pairing matrices are just $r$, which self-pairs the hemisphere, and $s(1)$ and $s(\tau)$, which pair parallel vertical walls. When $\Delta$ is even, the relations are $r^2$ from the self-paired hemisphere, $s(1)s(\tau)s(1)^{-1}s(\tau)^{-1}$ from the edge cycle of length four among the four vertical walls, and $(rs(1))^2$ from the edge cycle of length two where the hemisphere meets the two vertical walls $\Re(\zeta)=\pm1/2$. The only difference when $\Delta$ is odd occurs among the vertical edges, of which there are now six. They split into two cycles of length three. Both cycles give the same relation, $s(1)s(\tau)s(1)^{-1}s(\tau)^{-1}$.\end{proof}

We are now ready to prove that $\text{PSL}_2(\cO)$ is a free product with amalgamation.

\begin{corollary}\label{cor:frohman}If $|\Delta| > 12$, then $\emph{PSL}_2(\cO)\simeq \emph{PE}_2(\cO)*_{N}\Gamma$ is a nontrivial amalgam, where $N$ is generated by $r$, $s(1)$, and $s(\tau)rs(\tau)^{-1}$ and $\Gamma$ depends on $\cO$.\end{corollary}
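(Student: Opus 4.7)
The plan is to apply an amalgamation form of Poincar\'e's polyhedron theorem (equivalently, Van Kampen for hyperbolic orbifolds) to a decomposition of the Ford domain of $\text{PSL}_2(\cO)$ cut by the horizontal plane $t = 2/3$.

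First I would analyze which faces of this Ford domain sit above or below $t = 2/3$. By Lemma \ref{lem:mag} combined with (\ref{eq:1}), every isometric hemisphere contributing a face has radius either exactly $1$ (centered on an element of $\cO$, from a matrix with unit bottom-left entry) or at most $1/2$ (from a matrix with bottom-left entry of magnitude $\geq 2$). Since the top of a hemisphere equals its radius, the plane $t = 2/3$ strictly separates these two classes: small hemispheres lie entirely below it, while unit hemispheres and the vertical walls of $F$ straddle it. Hence above $t = 2/3$, the $\text{PSL}_2(\cO)$ Ford domain agrees with the $\text{PE}_2(\cO)$ Ford domain from Theorem \ref{thm:intro}, and the face-pairings supported above $t = 2/3$ are exactly those analyzed in the proof of Corollary \ref{cor:fine}, generating $\text{PE}_2(\cO)$.

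Next I would define $\Gamma$ as the subgroup generated by those face-pairings supported below $t = 2/3$. Every face meets at least one side, so together these generators account for all of $\text{PSL}_2(\cO)$. The image $S$ in $M := \text{PSL}_2(\cO)\backslash\bH$ of the intersection of the Ford domain with $\{t = 2/3\}$ is a properly embedded two-sided surface separating $M$ into an upper ``cusp region'' $M_+$ and a lower ``thick part'' $M_-$. Van Kampen's theorem for orbifolds then gives $\text{PSL}_2(\cO) \cong \pi_1^{\mathrm{orb}}(M_+) *_{\pi_1^{\mathrm{orb}}(S)} \pi_1^{\mathrm{orb}}(M_-)$, and reading off the face-pairings of the upper and lower pieces identifies these three groups with $\text{PE}_2(\cO)$, $N$, and $\Gamma$ respectively. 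The specific generating set $\{r, s(1), s(\tau)rs(\tau)^{-1}\}$ for $N$ arises by enumerating the face-pairings that straddle $t = 2/3$, after choosing $F$ so that the unit hemispheres at both $0$ and $\tau$ appear as self-paired faces.

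Nontriviality of the amalgam follows from two observations. First, $\Gamma$ contains face-pairings of small hemispheres, whose matrix representatives have bottom-left entries of magnitude $\geq 2$ and thus by Theorem \ref{thm:main} lie outside $\text{PE}_2(\cO) \supset N$, so $\Gamma \supsetneq N$. Second, from Corollary \ref{cor:fine}, $s(\tau)$ has nontrivial image in the abelianization of $\text{PE}_2(\cO)$ independent of the images of the three $N$-generators, so $s(\tau) \notin N$ and $\text{PE}_2(\cO) \supsetneq N$. The main obstacle is the bookkeeping in the Van Kampen step: identifying $N$ exactly as the stated subgroup depends on the choice of $F$ and on tracing which edges of the Ford domain puncture the plane $t = 2/3$.
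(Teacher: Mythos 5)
Your skeleton matches the paper's: cut the Ford domain of $\text{PSL}_2(\cO)$ at $t=2/3$, identify the face-pairings meeting the upper region with $\text{PE}_2(\cO)$ via Theorem \ref{thm:main}, the straddling pairings with $N$, the lower ones with $\Gamma$, and rule out $N=\text{PE}_2(\cO)$ by a homomorphism that kills $r$ and $s(1)$ but not $s(\tau)$ (your abelianization argument is the paper's map onto $\langle s(\tau)\rangle$ in different clothing). Your opening observation---that Lemma \ref{lem:mag} forces every non-unit isometric hemisphere to have radius at most $1/2$, so $t=2/3$ cleanly separates the two classes---is exactly why the plane works, and is a point the paper leaves implicit. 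Where you genuinely diverge is the middle step: you route the splitting through orbifold Van Kampen applied to a separating surface in $\text{PSL}_2(\cO)\backslash\bH$, which is precisely the Fine--Frohman machinery the paper is written to avoid. The paper instead splits the Poincar\'e presentation $\langle G\,|\,R\rangle$ combinatorially: paired faces, and likewise the edges in a single cycle, are either all ``at least partially above'' or all ``at least partially below'' $t=2/3$, so $G=G_a\cup G_b$ and $R=R_a\cup R_b$, and $\langle G_a\cup G_b\,|\,R_a\cup R_b\rangle$ is already the presentation of the amalgam over $\langle G_a\cap G_b\rangle$. If you keep the Van Kampen route you owe more than you supply: Van Kampen gives only a pushout, and to conclude it is an amalgamated free product with edge group $\pi_1^{\mathrm{orb}}(S)\simeq N$ you must show $\pi_1^{\mathrm{orb}}(S)$ injects into both pieces (incompressibility on both sides), which is the hard part of \cite{frohman} and is never addressed in your sketch.

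There is also a concrete error in your nontriviality argument for $\Gamma\supsetneq N$. Theorem \ref{thm:main} does not say that a matrix whose bottom-left entry has magnitude at least $2$ lies outside $\text{PE}_2(\cO)$; for instance $rs(\beta)r\in\text{PE}_2(\cO)$ has bottom-left entry $\beta$, which can be arbitrarily large. The theorem says only that the isometric hemispheres of such elements \emph{of} $\text{PE}_2(\cO)$ contribute no face to the $\text{PE}_2(\cO)$ Ford domain, so your inference ``small hemisphere, hence pairing matrix outside $\text{PE}_2(\cO)$'' needs an extra step (a face of the $\text{PSL}_2(\cO)$ domain on a small hemisphere is two-dimensional and sits inside the $\text{PE}_2(\cO)$ domain, so its hemisphere cannot come from $\text{PE}_2(\cO)$). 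In any case the inequality $N\neq\Gamma$ comes for free: if $N=\Gamma$ the amalgam collapses to $\text{PE}_2(\cO)$, contradicting the infinite index in Corollary \ref{cor:nica}. (The paper itself only verifies $N\neq\text{PE}_2(\cO)$ explicitly.)
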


\begin{proof}Take $F$ from Definition \ref{def:bianchi} to be the rectangle centered at $\sqrt{\Delta}/4$, and let $\cF$ be a Ford domain for $\text{PSL}_2(\cO)$ as shown in Figure \ref{fig:1}. Write $\text{PSL}_2(\cO)=\langle G\,|\,R\rangle$ using Poincar\'{e}s polyhedron theorem. By Theorem \ref{thm:main}, the faces and edges that lie at least partially above the Euclidean plane $t=2/3$ define a presentation of $\text{PE}_2(\cO)$, say $\langle G_a\,|\,R_a\rangle$. Let $G_b\subset G$ be the generators from faces that lie at least partially below $t=2/3$, and let $R_b$ be the relations among them from $R$. Since paired faces are either both at least partially above or both at least partially below $t=2/3$, we have $G=G_a\cup G_b$. The same is true of edges in a cycle, so $R=R_a\cup R_b$. 

Let $N$ denote the subgroup of $\text{PE}_2(\cO)$ generated by $G_a\cap G_b$, which consists of $s(1)$ to pair the two vertical walls that intersect $t=2/3$, $r$ to self-pair the unit hemisphere centered at $0$, and $s(\tau)rs(\tau)^{-1}$ to self-pair or pair the remaining unit hemisphere or hemispheres depending on $\Delta\,\text{mod}\,2$. If necessary, add relations to $R_b$ so that the inclusion map $N\hookrightarrow\langle G_b\,|\,R_b\rangle$ is well-defined. (It is not necessary, but proving this is not useful.) Then $\text{PSL}_2(\cO)=\langle G_a\cup G_b\,|\,R_a\cup R_b\rangle \simeq \text{PE}_2(\cO) *_N\langle G_b\,|\,R_b\rangle$. Let us show $N\neq\text{PE}_2(\cO)$ to verify that our amalgam is nontrivial. By Corollary \ref{cor:fine}, the map $r,s(1)\mapsto\text{Id}$ and $s(\tau)\mapsto s(\tau)$ extends to a homomorphism $\text{PE}_2(\cO)\to \langle s(\tau)\rangle$ since all three group relations map to the identity. This homomorphism is surjective with kernel containing $N$.\end{proof}

\section*{Statements and declarations}

\subsection*{Conflict of interest} There are no conflicts of interest to report.

\subsection*{Funding} This research received no external funding.

\subsection*{Author contribution} The author confirms sole responsibility for the proofs herein and for the preparation of this manuscript.

\subsection*{Data availability} The Ford domain data for $\text{PSL}_2(\bZ[i\sqrt{10}])$ were produced by John Cremona's implementaion of Swan's algorithm \cite{cremona}. No other data were used.

\bibliographystyle{plain}
\bibliography{refs}

\end{document}